\let\oldtocsection=\tocsection
\let\oldtocsubsection=\tocsubsection
\let\oldtocsubsubsection=\tocsubsubsection
\renewcommand{\tocsection}[2]{\hspace{0em}\oldtocsection{#1}{#2}}
\renewcommand{\tocsubsection}[2]{\hspace{5em}\oldtocsubsection{#1}{#2}}
\renewcommand{\tocsubsubsection}[2]{\hspace{2em}\oldtocsubsubsection{#1}{#2}}
\def\Xint#1{\mathchoice
{\XXint\displaystyle\textstyle{#1}}%
{\XXint\textstyle\scriptstyle{#1}}%
{\XXint\scriptstyle\scriptscriptstyle{#1}}%
{\XXint\scriptscriptstyle\scriptscriptstyle{#1}}%
\!\int}
\def\XXint#1#2#3{{\setbox0=\hbox{$#1{#2#3}{\int}$}
\vcenter{\hbox{$#2#3$}}\kern-.5\wd0}}
\def\avgint{\Xint-}
\newtheorem{theorem}{Theorem}[section]
\newtheorem{lemma}[theorem]{Lemma}
\newtheorem{definition}[theorem]{Definition}
\theoremstyle{definition}
\theoremstyle{remark}
\newtheorem{remark}[theorem]{Remark}
\numberwithin{equation}{section}
\newcommand{\R}{\mathbb R}
\newcommand{\Z}{\mathbb Z}
\newcommand{\subRn}{{{\mathbb R}^n}}
\newcommand{\Q}{\mathcal Q}
\newcommand{\D}{\mathcal D}
\newcommand{\Ss}{\mathcal S}
\newcommand{\C}{\mathcal{C}}
\DeclareMathOperator{\supp}{supp}
\DeclareMathOperator*{\esssup}{ess\,sup}
\newcommand{\avg}[1]{\langle #1 \rangle}
\title[Elementary proofs of one weight inequalities]{Elementary proofs of one weight norm
  inequalities for fractional integral operators and commutators}
\author{David Cruz-Uribe, SFO}
\address{Department of Mathematics, Trinity
  College, Hartford, CT 06106, USA}
\email{david.cruzuribe@trincoll.edu}
\thanks{The author is supported by the Stewart-Dorwart faculty
  development fund at Trinity College and by NSF grant 1362425.}
\dedicatory{Dedicated to the memory of Professor Cora Sadosky}
\subjclass[2010]{42B25, 42B30, 42B35}
\keywords{fractional integral operators, commutators, dyadic
  operators, weights}
\date{March 5, 2015}
\begin{document}

\maketitle

\begin{abstract}
  We give new and elementary proofs of one weight norm inequalities
  for fractional integral operators and commutators.  Our proofs are
  based on the machinery of dyadic grids and sparse operators
  used in the proof of the $A_2$ conjecture.
\end{abstract}

\section{Introduction}

The fractional integral operators, also called the Riesz
potentials, are the convolution operators
\[  I_\alpha f(x) = \int_\subRn \frac{f(y)}{|x-y|^{n-\alpha}}\,dy,
\qquad 0< \alpha<n.  \]
These operators are classical and for $1<p<\frac{n}{\alpha}$ and $q$ defined
by $\frac{1}{p}-\frac{1}{q} = \frac{\alpha}{n}$,  satisfy $I_\alpha : L^p \rightarrow L^q$.
When $p=1$ they satisfy the endpoint estimate  $I_\alpha : L^1
\rightarrow L^{q,\infty}$.
(Cf.~Stein~\cite{MR0290095}.)   One weight norm inequalities for these
operators were first considered by Muckenhoupt and
Wheeden~\cite{muckenhoupt-wheeden74}, who introduced the governing
class of weights, $A_{p,q}$.  For $1<p<\frac{n}{\alpha}$ and $q$ such that
$\frac{1}{p}-\frac{1}{q} = \frac{\alpha}{n}$, a weight (i.e., a non-negative, locally
integrable function) $w$ is in $A_{p,q}$ if
\[  [w]_{A_{p,q}} = \sup_Q \left(\avgint_Q
  w^{q}\,dx\right)^{\frac{1}{q}}
\left(\avgint_Q w^{-p'}\,dx\right)^{\frac{1}{p'}}<\infty, \]
where the supremum is taken over all cubes $Q$ in $\R^n$.  When $p=1$,
$q=\frac{n}{n-\alpha}$, we say $w\in A_{1,q}$ if 
\[  [w]_{A_{1,q}} = \sup_Q \esssup_{x\in Q} \left(\avgint_Q
  w^{q}\,dx\right)^{\frac{1}{q}} w(x)^{-1} < \infty. \]
Muckenhoupt and Wheeden showed that when $p>1$, $I_\alpha : L^p(w^p)\rightarrow
L^q(w^q)$ if and only $w\in A_{p,q}$, and when $p=1$, $I_\alpha : L^1(w)\rightarrow
L^{q,\infty}(w^q)$ when $w\in A_{1,q}$.   Their proof used a good-$\lambda$
inequality relating $I_\alpha$ and the fractional maximal operator,
\[ M_\alpha f(x) = \sup_Q |Q|^{\frac{\alpha}{n}}\avgint_Q |f(y)|\,dy \cdot \chi_Q(x). \]
Weighted norm inequalities for $M_\alpha$ were proved by generalizing
the earlier results for the Hardy-Littlewood maximal operator.
A different proof of the strong type inequality was given
in~\cite{cruz-uribe-martell-perez06b}:  there they used Rubio de Francia
extrapolation to prove a norm inequality relating $I_\alpha$ and
$M_\alpha$.  

Given $b\in BMO$ we define the commutator of a fractional integral by
\[ [b,I_\alpha] f(x) = b(x)I_\alpha f(x) - I_\alpha (bf)(x) 
= \int_\subRn \big( b(x) - b(y) \big)
\frac{f(y)}{|x-y|^{n-\alpha}}\,dy. \]
These commutators were introduced by Chanillo~\cite{MR642611}, who proved
that with $p$ and $q$ defined as above, $[b,I_\alpha] : L^p
\rightarrow L^q$.  He also proved that when $p$ is an even integer,
$b\in BMO$ is a necessary condition.  (The necessity for the full range
of $p$ was
recently shown by Chaffee~\cite{2014arXiv1410.4587C}.)   Commutators
are more singular than the fractional integral operator:  this can be
seen by the fact that when $p=1$, the do not map $L^1$ into
$L^{q,\infty}$.  For a counter-example and substitute endpoint estimate,
see~\cite{cruz-uribe-fiorenza03}.    In this paper it was also shown
that the strong type inequality is governed by $A_{p,q}$ weights:
if $1<p<\frac{n}{\alpha}$ and $w\in A_{p,q}$, then $[b,I_\alpha] : L^p(w^p)
\rightarrow L^q(w^q)$.  This proof relied on a sharp maximal function
estimate relating the commutator, $I_\alpha$, and $M_\alpha$.  A
different proof using extrapolation to relate the commutator to an
Orlicz fractional maximal operator was given
in~\cite{cruz-uribe-martell-perez06b}.  Yet another proof, one that gave
the sharp constant in terms of the $[w]_{A_{p,q}}$ characteristic, was
given in~\cite{cruz-moen2012}.  This proof used a Cauchy integral
formula argument due to Chung, {\em et al.}~\cite{MR2869172}.  

\medskip

In this paper we give new and elementary proofs of the one weight
inequalities for fractional integral operators and commutators.
More precisely, we prove the following three theorems.

\begin{theorem} \label{thm:weak-frac}
Given $0<\alpha<n$, let $q=\frac{n}{n-\alpha}$.  If $w\in A_{1,q}$,
then for all $f\in L^1(w)$,
\[ \sup_{t>0} t\, w^q(\{ x\in \R^n : |I_\alpha f(x)|>t \})^{\frac{1}{q}}
\leq C(n,\alpha)[w]_{A_{1,q}}^{1+q}\int_\subRn |f(x)| w(x)\,dx. \]
\end{theorem}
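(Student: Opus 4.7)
The plan is to reduce via sparse domination and then prove the weak-type inequality for a single sparse fractional integral by a Calder\'on--Zygmund stopping-time argument adapted to the $A_{1,q}$ condition.

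By the standard dyadic sparse domination for fractional integrals, $I_\alpha |f|(x)\leq C_n\sum_\mathcal{D} I_\alpha^{\mathcal{S}_\mathcal{D}}|f|(x)$, where the sum runs over finitely many shifted dyadic grids and each $\mathcal{S}_\mathcal{D}\subset\mathcal{D}$ is sparse, with $I_\alpha^\mathcal{S} f:=\sum_{Q\in\mathcal{S}}|Q|^{\alpha/n}\avgint_Q f\,\chi_Q$. It suffices to prove the stated bound for a single $I_\alpha^\mathcal{S} f$ with $f\ge 0$. The $A_{1,q}$ condition then yields a clean pointwise bound on each summand: combining $\int_Q f\le(\essinf_Q w)^{-1}\|f\|_{L^1(w)}$ with $(\essinf_Q w)^{-1}\le[w]_{A_{1,q}}|Q|^{1/q}w^q(Q)^{-1/q}$ and the algebraic identity $\alpha/n+1/q=1$ gives
\[ |Q|^{\alpha/n}\avgint_Q f \le [w]_{A_{1,q}}\,\|f\|_{L^1(w)}\,w^q(Q)^{-1/q}. \]

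Fix $t>0$ and set $\lambda:=t/(c_n[w]_{A_{1,q}}^q)$; let $\{Q_j\}$ be the maximal cubes in $\mathcal{S}$ with $|Q_j|^{\alpha/n}\avgint_{Q_j}f>\lambda$, which are pairwise disjoint, and write $\Omega=\bigcup_j Q_j$. Rewriting the stopping condition as $|Q_j|^{1/q}<\lambda^{-1}\int_{Q_j}f$ and combining the $A_{1,q}$ relation $w^q(Q_j)\le[w]_{A_{1,q}}^q|Q_j|(\essinf_{Q_j}w)^q$ with $\essinf_{Q_j}w\,\int_{Q_j}f\le\int_{Q_j}fw$ gives $w^q(Q_j)\le[w]_{A_{1,q}}^q\lambda^{-q}(\int_{Q_j}fw)^q$; summing via the inequality $\sum a_j^q\le(\sum a_j)^q$ and using disjointness,
\[ w^q(\Omega)^{1/q}\le[w]_{A_{1,q}}\lambda^{-1}\|f\|_{L^1(w)}=c_n\,[w]_{A_{1,q}}^{1+q}\,t^{-1}\|f\|_{L^1(w)}. \]
To conclude I would establish $\{I_\alpha^\mathcal{S} f>t\}\subset\Omega$ by combining the pointwise bound above with the reverse-doubling of $w^q\in A_1$ (with $[w^q]_{A_1}\le[w]_{A_{1,q}}^q$) along the nested sparse chain through any $x\notin\Omega$; the resulting geometric series sums to $\le C[w]_{A_{1,q}}^q\lambda=t$ for appropriate $c_n$.

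The main obstacle is precisely the containment $\{I_\alpha^\mathcal{S} f>t\}\subset\Omega$. A priori, infinitely many sparse cubes pass through a fixed point, so the naive termwise bound $\lambda$ alone yields a divergent sum. The resolution is the pointwise $A_{1,q}$ identity, which replaces $\lambda$ with the scale-sensitive quantity $w^q(Q)^{-1/q}$, together with reverse-doubling of $w^q$, which forces $w^q(Q)^{-1/q}$ to decay geometrically along any sparse chain. The reverse-doubling constant depends polynomially on $[w]_{A_{1,q}}^q$, and this is exactly where the additional factor of $[w]_{A_{1,q}}^q$---on top of the $[w]_{A_{1,q}}$ arising from the $A_{1,q}$ count of $w^q(\Omega)$---is produced, giving the final constant $[w]_{A_{1,q}}^{1+q}$.
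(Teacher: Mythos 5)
There is a genuine gap, and it is exactly at the step you flag as the main obstacle: the containment $\{I_\alpha^\Ss f>t\}\subset\Omega$ is false, and the reverse-doubling mechanism you propose cannot repair it. Since $\Omega=\{M_\alpha^\Ss f>\lambda\}$ with $M_\alpha^\Ss f(x)=\sup_{Q\in\Ss,\,Q\ni x}|Q|^{\alpha/n}\avg{f}_Q$, asking for the containment at every $t$ with $\lambda=t/(c_n[w]_{A_{1,q}}^q)$ is the same as asking for the pointwise bound $I_\alpha^\Ss f\leq c_n[w]_{A_{1,q}}^qM_\alpha^\Ss f$, which fails already for $w\equiv 1$: in $\R^1$ take $\Ss=\{[0,2^k):0\leq k\leq N\}$ (this is sparse) and place mass $m_k\approx\lambda_0 2^{k/q}$ on $(2^{k-1},2^k)$ so that $|Q|^{\alpha}\avg{f}_Q=\lambda_0$ for every $Q\in\Ss$; then $M_\alpha^\Ss f=\lambda_0$ on $[0,1)$ while $I_\alpha^\Ss f\geq(N+1)\lambda_0$ there, so for $t\approx N\lambda_0$ the level set contains $[0,1)$ but no stopping cube is selected and $\Omega=\emptyset$. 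Your proposed rescue does not close this: the bound $|Q|^{\alpha/n}\avg{f}_Q\leq[w]_{A_{1,q}}\|f\|_{L^1(w)}w^q(Q)^{-1/q}$ decays geometrically only as the cubes in the chain \emph{grow}, so summing it from the smallest sparse cube $Q_{\min}\ni x$ upward yields $I_\alpha^\Ss f(x)\lesssim[w]_{A_{1,q}}^{1+q}\|f\|_{L^1(w)}\,w^q(Q_{\min})^{-1/q}$, a global quantity that nothing at $x\notin\Omega$ controls by $t$ (in the example it is $\approx\lambda_0 2^{N/q}\gg t$). Equivalently, reverse doubling only bounds the number of chain terms of size $\approx\lambda$ logarithmically in the mass ratio, not by an absolute constant, and a pointwise domination of $I_\alpha$ by $M_\alpha$ is well known to be unavailable; passing from the maximal operator to the fractional integral requires an additional mechanism (good-$\lambda$, extrapolation, or an absorption argument). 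Note that your first display, the estimate $w^q(\Omega)^{1/q}\leq[w]_{A_{1,q}}\lambda^{-1}\|f\|_{L^1(w)}$, is correct, but it is precisely the weak $(1,q)$ inequality for $M_\alpha^\D$, which the paper only uses for a finiteness check.

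For comparison, the paper avoids any such containment. It takes the maximal dyadic cubes $Q$ of $E_t=\{I_\alpha^\D f>t\}$, observes that the outer part $I_{\alpha,Q}^{\D,out}f\leq t$ on $Q$ so the inner part exceeds $t$ on $Q\cap E_{2t}$, and then splits the cubes according to whether $v(Q\cap E_{2t})\geq 2^{-q-1}v(Q)$ with $v=w^q$. The small cubes are absorbed into $\tfrac12\sup_{0<t<T}t^qv(E_t)$ (a truncated supremum whose finiteness is verified separately via the $M_\alpha^\D$ bound), and on the large cubes one dualizes using the self-adjointness of $I_{\alpha,Q}^{\D,in}$ and applies the $A_1$ and reverse H\"older properties to $v$ rather than to $f$, obtaining $I_{\alpha,Q}^{\D,in}v(x)\lesssim[w]_{A_{1,q}}^{1+q}v(Q)^{1/q'}w(x)$; this is where the geometric decay along the dyadic chain is used legitimately. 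If you wish to keep a stopping-time framework, you would need to incorporate some such absorption or duality step; the termwise estimates alone cannot give the containment you rely on.
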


\begin{theorem} \label{thm:strong-frac-one}
Given $0<\alpha<n$, $1<p<\frac{n}{\alpha}$, let $q$ be  such that
$\frac{1}{p}-\frac{1}{q}=\frac{\alpha}{n}$.  If $w\in
A_{p,q}$, then for all $f\in L^p(w^p)$,
\[ \left(\int_\subRn |I_\alpha f(x)|^q w(x)^q\,dx\right) ^{\frac{1}{q}}
\leq C(n,p,\alpha) [w]_{A_{p,q}}^{1+\frac{q}{p'}+\frac{p'}{p}}
\left(\int_\subRn |f(x)|^pw(x)^p\,dx\right)^{\frac{1}{p}}. \]
\end{theorem}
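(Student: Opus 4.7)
The plan is to dyadicize the fractional integral operator, reduce pointwise to a sparse dyadic fractional operator, and then establish the weighted estimate for the latter by duality combined with the $A_{p,q}$ condition.

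First, using a shifted dyadic grid decomposition (the ``three lattice'' theorem), there is a finite family of dyadic grids $\mathcal{D}_1,\dots,\mathcal{D}_{3^n}$ such that one has the pointwise bound
\[ I_\alpha f(x)\leq C(n,\alpha)\sum_{k=1}^{3^n} T_\alpha^{\mathcal{D}_k}f(x),\qquad T_\alpha^{\mathcal{D}}f(x)=\sum_{Q\in\mathcal{D}}|Q|^{\alpha/n}\avgint_Q|f(y)|\,dy\cdot\chi_Q(x). \]
A standard Calder\'on--Zygmund stopping-time argument, applied to the averages $\avgint_Q|f|$, produces for each $\mathcal{D}_k$ a sparse subfamily $\mathcal{S}_k\subset\mathcal{D}_k$ (meaning there exist pairwise disjoint $E(Q)\subset Q$ with $|E(Q)|\geq \tfrac12|Q|$) such that $T_\alpha^{\mathcal{D}_k}f(x)\leq C(n,\alpha) T_\alpha^{\mathcal{S}_k}f(x)$. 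It therefore suffices to prove the desired inequality for a sparse fractional operator $T_\alpha^{\mathcal{S}}$.

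For the sparse bound I would use duality. Set $u=w^q$ and $\sigma=w^{-p'}$. Testing against $g$ with $\|g\|_{L^{q'}(u)}\leq 1$ gives the bilinear form
\[ \sum_{Q\in\mathcal{S}}|Q|^{\alpha/n}\avgint_Q f\,dy\int_Q g\,u\,dx. \]
Rewriting the averages in terms of the $\sigma$- and $u$-measures yields
\[ \sum_{Q\in\mathcal{S}}|Q|^{\alpha/n-1}\sigma(Q)u(Q)\,\langle f\sigma^{-1}\rangle_Q^\sigma\,\langle g\rangle_Q^u, \]
where $\langle h\rangle_Q^\mu=\mu(Q)^{-1}\int_Q h\,d\mu$. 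Applying the $A_{p,q}$ inequality $\sigma(Q)^{1/p'}u(Q)^{1/q}\leq [w]_{A_{p,q}}|Q|^{1-\alpha/n}$ extracts one factor of $[w]_{A_{p,q}}$ and leaves
\[ \sum_{Q\in\mathcal{S}}\sigma(Q)^{1/p}u(Q)^{1/q'}\,\langle f\sigma^{-1}\rangle_Q^\sigma\,\langle g\rangle_Q^u. \]
This residual sum is controlled via H\"older's inequality, sparseness (passing from $Q$ to the disjoint major subsets $E(Q)$), and the boundedness of the weighted Hardy--Littlewood maximal operators $M^\sigma$ on $L^p(\sigma)$ and $M^u$ on $L^{q'}(u)$, both of which hold with constants independent of the weights.

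The main obstacle will be extracting exactly the exponent $1+q/p'+p'/p$. The two extra powers arise when replacing $\sigma(Q)$ and $u(Q)$ by $\sigma(E(Q))$ and $u(E(Q))$: this uses the $A_\infty$ character of $\sigma$ and $u$, which follows from $w\in A_{p,q}$ through the equivalences $u\in A_{1+q/p'}$ and $\sigma\in A_{1+p'/q}$, with quantitative constants of the correct order $[w]_{A_{p,q}}^{q/p'}$ and $[w]_{A_{p,q}}^{p'/p}$. Because $p\neq q$ in the fractional setting, the two maximal function estimates are not symmetric, so choosing the H\"older exponents in the summation to match these factors with the norms $\|f\|_{L^p(w^p)}$ and $\|g\|_{L^{q'}(u)}$ is the delicate technical point.
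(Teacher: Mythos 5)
Your strategy is the same as the paper's: reduce to shifted dyadic grids and then to a sparse operator, dualize with $u=w^q$, $\sigma=w^{-p'}$, pull out one factor $[w]_{A_{p,q}}$ from the $A_{p,q}$ condition, and pay $[w]_{A_{p,q}}^{p'/p}$ and $[w]_{A_{p,q}}^{q/p'}$ to pass from $\sigma(Q),u(Q)$ to $\sigma(E(Q)),u(E(Q))$ via $u\in A_{1+q/p'}$, $\sigma\in A_{1+p'/q}$. All of that matches the paper, including the constant bookkeeping. The gap is exactly at the step you yourself flag as ``delicate,'' and it is a missing idea rather than a routine verification. Since $\frac1p+\frac1{q'}=1+\frac{\alpha}{n}>1$, the residual sum $\sum_{Q\in\mathcal{S}}\sigma(Q)^{1/p}u(Q)^{1/q'}\langle f\sigma^{-1}\rangle_Q^\sigma\langle g\rangle_Q^u$ cannot be closed using only the diagonal operators $M_\sigma$ on $L^p(\sigma)$ and $M_u$ on $L^{q'}(u)$: applying H\"older with exponents $(p,p')$ leaves the $g$-side as $\sum_Q[\langle g\rangle_Q^u]^{p'}u(E(Q))^{p'/q'}$ with $p'/q'>1$, which the diagonal bound does not control, and no three-exponent H\"older works because $\frac1p+\frac1{q'}>1$.

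The ingredient the paper uses, and which your sketch omits, is the weighted \emph{fractional} dyadic maximal operator: write $u(Q)^{1/q'}=u(Q)^{1/p'}u(Q)^{\alpha/n}$, keep $u(Q)^{\alpha/n}\langle g\rangle_Q^u\le M^{\mathcal{D}}_{u,\alpha}g(x)$ for $x\in E(Q)$, replace only the factor $u(Q)^{1/p'}$ by $u(E(Q))^{1/p'}$ (this is where $[w]_{A_{p,q}}^{q/p'}$ comes from), and then invoke the off-diagonal bound $M^{\mathcal{D}}_{u,\alpha}:L^{q'}(u)\to L^{p'}(u)$, valid for every weight with constant independent of the weight (proved in the paper by an off-diagonal Marcinkiewicz interpolation of the weak-type estimate; note $\frac1{q'}-\frac1{p'}=\frac{\alpha}{n}$). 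Without this, your stated constants are inconsistent with your displayed residual sum: the cost $[w]_{A_{p,q}}^{q/p'}$ corresponds to replacing $u(Q)^{1/p'}$ only, whereas replacing $u(Q)^{1/q'}$ wholesale costs $[w]_{A_{p,q}}^{q/q'}$. One can in fact close the argument with only the diagonal $M_u$ by using the embedding $\ell^{q'}\hookrightarrow\ell^{p'}$ on the $g$-side, but then the exponent becomes $1+\frac{q}{q'}+\frac{p'}{p}$, which is strictly larger than the claimed $1+\frac{q}{p'}+\frac{p'}{p}$ since $q>p$; so to prove the theorem as stated you need the fractional weighted maximal operator (or an equivalent off-diagonal substitute), not just the two Hardy--Littlewood bounds you cite.
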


\begin{theorem} \label{thm:strong-commutator}
Given $0<\alpha<n$, $1<p<\frac{n}{\alpha}$, let $q$ be such that
$\frac{1}{p}-\frac{1}{q}=\frac{\alpha}{n}$.  If $w\in
A_{p,q}$, then for all $f\in L^p(w^p)$ and $b\in BMO$,
\begin{multline*}
 \left(\int_\subRn |[b,I_\alpha]f(x)|^q w(x)^q\,dx\right)
 ^{\frac{1}{q}} \\
\leq C(n,p,\alpha) [w]_{A_{p,q}}^{\max(p',q)+1+\frac{q}{p'}+\frac{p'}{p}}\|b\|_{BMO}
\left(\int_\subRn |f(x)|^pw(x)^p\,dx\right)^{\frac{1}{p}}. 
\end{multline*}
\end{theorem}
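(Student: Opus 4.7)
The plan is to deduce Theorem \ref{thm:strong-commutator} from Theorem \ref{thm:strong-frac-one} via the complex-analytic ``conjugation trick'' of Chung, Pereyra and P\'erez~\cite{MR2869172}, already used in \cite{cruz-moen2012} to obtain the sharp constant. For a complex parameter $z$, introduce the conjugated operator
$$I_\alpha^z f(x) := e^{zb(x)}\,I_\alpha\!\bigl(e^{-zb(\cdot)}f\bigr)(x).$$
Since $z\mapsto I_\alpha^zf(x)$ is entire with $\tfrac{d}{dz}I_\alpha^z f(x)\big|_{z=0}=-[b,I_\alpha]f(x)$, the Cauchy integral formula on the circle $|z|=\varepsilon$ together with Minkowski's integral inequality yields
$$\|[b,I_\alpha]f\|_{L^q(w^q)}\le\varepsilon^{-1}\sup_{|z|=\varepsilon}\bigl\|I_\alpha(e^{-zb}f)\bigr\|_{L^q(w_z^q)}, \qquad w_z := e^{(\mathrm{Re}\,z)\,b}\,w,$$
using the pointwise identity $|e^{zb}|=e^{(\mathrm{Re}\,z)\,b}$.

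Next, I would apply Theorem \ref{thm:strong-frac-one} to the weight $w_z$ and the function $e^{-zb}f$. Since $|e^{-zb}|^p w_z^p = w^p$ pointwise, this gives
$$\bigl\|I_\alpha(e^{-zb}f)\bigr\|_{L^q(w_z^q)} \le C\,[w_z]_{A_{p,q}}^{1+q/p'+p'/p}\,\|f\|_{L^p(w^p)},$$
so the problem reduces to bounding $[w_z]_{A_{p,q}}$ in terms of $\varepsilon$, $\|b\|_{BMO}$, and $[w]_{A_{p,q}}$. For this the John--Nirenberg inequality comes into play: for $\varepsilon\|b\|_{BMO}$ sufficiently small, the exponentials $e^{\pm q(\mathrm{Re}\,z)\,b}$ and $e^{\mp p'(\mathrm{Re}\,z)\,b}$ satisfy reverse H\"older inequalities with constants close to $1$, and combining these with the sharp reverse H\"older estimates for $w^q$ and $w^{-p'}$ implied by $w\in A_{p,q}$, one can arrange $[w_z]_{A_{p,q}}\le 2\,[w]_{A_{p,q}}$ provided
$$\varepsilon \le \frac{c(n,p)}{\|b\|_{BMO}\,[w]_{A_{p,q}}^{\max(p',q)}}.$$
Choosing $\varepsilon$ at this threshold and combining with the previous two displays produces the claimed exponent $\max(p',q)+1+q/p'+p'/p$ on $[w]_{A_{p,q}}$.

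The main obstacle is this last, quantitative step. The key point is that the sharp reverse H\"older ``margin'' available for $w^q$ is of order $[w]_{A_{p,q}}^{-q}$ and for $w^{-p'}$ of order $[w]_{A_{p,q}}^{-p'}$ (these are precisely the quantitative $A_\infty$ constants one reads off from $w\in A_{p,q}$). Because both factors in the $A_{p,q}$ supremum must absorb the exponential perturbations $e^{q(\mathrm{Re}\,z)b}$ and $e^{-p'(\mathrm{Re}\,z)b}$ respectively, the worse of the two margins dictates how small $\varepsilon$ must be, and this is precisely what produces the exponent $\max(p',q)$. All the remaining steps---the Cauchy formula, Minkowski's inequality, and the direct invocation of Theorem \ref{thm:strong-frac-one}---are essentially formal, so the heart of the argument is this delicate bookkeeping of John--Nirenberg and sharp reverse H\"older constants.
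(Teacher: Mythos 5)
Your argument is essentially correct, but it is a genuinely different route from the paper's. You deduce Theorem \ref{thm:strong-commutator} from Theorem \ref{thm:strong-frac-one} by the Cauchy integral/conjugation method of Chung, Pereyra and P\'erez \cite{MR2869172}, exactly as in \cite{cruz-moen2012}: write $[b,I_\alpha]f$ as a contour integral of $e^{zb}I_\alpha(e^{-zb}f)$ over $|z|=\varepsilon$ (note the derivative at $z=0$ is $+[b,I_\alpha]f$, not $-[b,I_\alpha]f$, but the sign is harmless), apply Minkowski's inequality and Theorem \ref{thm:strong-frac-one} with the perturbed weight $w_z=e^{({\rm Re}\,z)b}w$, and then choose $\varepsilon\approx \big(\|b\|_{BMO}[w]_{A_{p,q}}^{\max(p',q)}\big)^{-1}$ so that $[w_z]_{A_{p,q}}\leq 2[w]_{A_{p,q}}$; this does yield the stated exponent $\max(p',q)+1+\frac{q}{p'}+\frac{p'}{p}$. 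The crux, which you only sketch, is the weight-perturbation lemma: one must run the sharp reverse H\"older inequality for $w^q$ (margin $\sim[w]_{A_{p,q}}^{-q}$) and for $w^{-p'}$ (margin $\sim[w]_{A_{p,q}}^{-p'}$) together with John--Nirenberg applied to $e^{\pm c({\rm Re}\,z)(b-\avg{b}_Q)}$, and observe that the factors $e^{\pm({\rm Re}\,z)\avg{b}_Q}$ cancel between the two halves of the $A_{p,q}$ product; your phrase about the exponentials ``satisfying reverse H\"older inequalities with constants close to $1$'' is loose, but the mechanism and the threshold for $\varepsilon$ are right, and this lemma is proved in \cite{cruz-moen2012}. (One should also restrict to $f\in L^\infty_c$ and small $|z|$ to justify analyticity and Minkowski, which is routine.) The paper deliberately avoids this machinery: its proof dualizes the dyadic operator $C_b^\D$, splits $|b(x)-b(y)|$ about $\avg{b}_Q$ into the two sums $I_1,I_2$, controls the $BMO$ factor by the generalized H\"older inequality with $\Phi(t)=t\log(e+t)$ and the weighted John--Nirenberg estimate of Lemma \ref{lemma:wtd-bmo}, passes to a sparse family via the level sets of $M^\D g$ using Lemma \ref{lemma:sum-alpha}, and then repeats the proof of Theorem \ref{thm:strong-frac-one} with the Orlicz maximal operator $M^\D_{\Phi,\sigma}$ of Lemma \ref{lemma:max-op}. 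Your route is shorter once the perturbation lemma is in hand (and it is the road to the sharp constant), whereas the paper's route stays entirely within the elementary dyadic/sparse framework, which is precisely its stated purpose.
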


We prove Theorems~\ref{thm:weak-frac}--\ref{thm:strong-commutator}
using the machinery of dyadic grids and sparse
operators.  Dyadic 
fractional integral operators date back to the work of
Sawyer and Wheeden~\cite{MR1175693}.  More recently, using the
machinery developed as part of the proof of the $A_2$ conjecture
for singular integral operators (see~\cite{hytonenP,MR3085756} and
the references they contain) dyadic fractional integral operators were
further developed and
applied to commutators in~\cite{cruz-moen2012,MR3065302,MR3224572}. 
The advantage of this approach is its simplicity:   it
avoids extrapolation, good-$\lambda$ inequalities and comparisons to
the fractional maximal operator.    One weakness of our proofs is that
they do not give sharp
dependence on the $A_{p,q}$ characteristic of the weights: this is
to be expected since we freely use their properties to simplify the
proofs, whereas any sharp constant proof must be arranged to use their
properties as few times as possible.  Sharp constants for the
fractional integral operator are given in~\cite{MR2652182}, and for
commutators in~\cite{cruz-moen2012}.

\medskip

The remainder of this paper is organized as follows.  In
Section~\ref{section:dyadic} we give some preliminary results about
dyadic grids, sparse operators, weighted fractional maximal operators,
and weights.  In Section~\ref{section:endpoint} we prove
Theorem~\ref{thm:weak-frac}.  Our proof adapts ideas first used by
Sawyer to prove two weight weak $(p,q)$ inequalities for fractional
integrals~\cite{MR719674}.  In Section~\ref{section:strong-frac} we
prove Theorem~\ref{thm:strong-frac-one}.  Our proof uses ideas of
P\'erez~\cite{MR1291534} from his proof of two weight inequalities for
fractional integrals,
and from the elementary proof of one weight inequalities for the
Hardy-Littlewood maximal operator due to Christ and
Fefferman~\cite{MR684636}.  In Section~\ref{section:strong-commutator}
we prove Theorem~\ref{thm:strong-commutator}.  Our proof uses
some ideas from the proof of two weight results
in~\cite{cruz-moen2012} to reduce the problem to an estimate
essentially the same as the one for the fractional integral in the
previous section.  Finally, in Section~\ref{section:cora} we give some
personal recollections about the late Cora Sadosky.

Throughout this paper notation is standard or will be defined as
needed.  The constant $n$ will always denote the dimension.  We will
denote constants by $C,\,c$, etc. and their value may change at each
appearance.   Unless otherwise specified, we will assume that
constants can depend on $p$, $\alpha$ and $n$ but we will keep track
of the dependence on the $A_{p,q}$ characteristic explicitly.

\section{Preliminary results}
\label{section:dyadic}

\subsection*{Dyadic grids and operators}  
We begin by defining dyadic grids and the dyadic fractional
integral operators.   Unless otherwise noted, the results given here
are taken from~\cite{cruz-antequera,cruz-moen2012,MR3065302,MR3224572}.

\begin{definition} \label{defn:dyadic-grid}
A collection of cubes $\D$ in $\R^n$ is a dyadic grid if provided that:
\begin{enumerate}
\setlength{\itemsep}{6pt}

\item If $Q\in \D$, then $\ell(Q)=2^k$ for some $k\in \Z$.

\item If $P,\,Q\in \D$, then $P\cap Q \in \{ P, Q, \emptyset \}$.

\item For every $k\in \Z$, the cubes $\D_k = \{ Q\in \D : \ell(Q)= 2^k
  \}$ form a partition of $\R^n$. 

\end{enumerate}
\end{definition}

\begin{definition} \label{defn:sparse}
Given a dyadic grid $\D$, a set $\Ss\subset \D$ is sparse if for every
$Q\in S$,
\[  \bigg|\bigcup_{\substack{P\in S\\P\subsetneq Q}} P\bigg|
\leq \frac{1}{2}|Q|. \]
Equivalently, if we define 
\[ E(Q) = Q \setminus \bigcup_{\substack{P\in S\\P\subsetneq Q}}
P, \]
then the sets $E(Q)$ are pairwise disjoint and $|E(Q)|\geq \frac{1}{2}|Q|$. 
\end{definition}

The classic example of a dyadic grid and sparse families are the
standard dyadic grid on $\R^n$ and the Calder\'on-Zygmund cubes
associated with an $L^1$ function.  See~\cite[Appendix~A]{MR2797562}. 

We next define a dyadic version of the fractional integral
operator and show that it can be used to bound $I_\alpha$ pointwise.   For
$f\in L^1_{loc}$ and a cube $Q$, let 
\[ \avg{f}_Q = \avgint_Q f(x)\,dx = \frac{1}{|Q|}\int_Q f(x)\,dy.  \]
Given $0<\alpha<n$ and a dyadic grid $\D$, define
\[ I_\alpha^\D f(x) = \sum_{Q\in \D} 
|Q|^{\frac{\alpha}{n}}\avg{f}_Q \cdot \chi_Q(x).  \]
Similarly, given a sparse subset $\Ss \subset \D$, define
\[ I_\alpha^\Ss f(x) = \sum_{Q\in \Ss} 
|Q|^{\frac{\alpha}{n}}\avg{f}_Q \cdot \chi_Q(x). \]

\begin{lemma} \label{lemma:dyadic-frac}
There exists a collection $\{\D_i\}_{i=1}^N$ of dyadic grids such that
for $0<\alpha<n$ and every non-negative function $f$,
\[ I_\alpha f(x) \leq C \sup_i I^{\D_i}_\alpha f(x). \]
Moreover, given any dyadic grid $\D$ and a non-negative function $f\in
L_c^\infty$, there exists a sparse set $\Ss\subset \D$ such that
\[ I^\D f(x) \leq C I^\Ss f(x). \]
\end{lemma}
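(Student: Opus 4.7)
The lemma has two separate claims. For the first I plan to use a finite family of shifted dyadic grids (the ``three lattices theorem'') together with a dyadic annular decomposition of the kernel $|x-y|^{\alpha-n}$. For each $t\in\{0,1/3,2/3\}^n$ form the shifted grid $\D^t$ obtained by translating the standard dyadic grid, giving $N=3^n$ grids $\D_1,\ldots,\D_N$. The standard fact about shifted grids is that for every ball $B\subset\R^n$ there exist $i$ and $Q\in\D_i$ with $B\subset Q$ and $\ell(Q)\le C_n\ell(B)$. For a non-negative $f$ and $x\in\R^n$, split
\[ I_\alpha f(x)=\sum_{k\in\Z}\int_{2^{k-1}<|x-y|\le 2^k}\frac{f(y)}{|x-y|^{n-\alpha}}\,dy\le C\sum_{k\in\Z}2^{k(\alpha-n)}\int_{B(x,2^k)}f(y)\,dy. \]
Choose $Q_k\in\D_{i_k}$ with $B(x,2^k)\subset Q_k$ and $|Q_k|\le C2^{kn}$. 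Then each term is bounded by $C|Q_k|^{\alpha/n}\avg{f}_{Q_k}\chi_{Q_k}(x)$, and pigeonholing over the $N$ grids yields $I_\alpha f(x)\le C\sum_i I^{\D_i}_\alpha f(x)\le CN\sup_i I^{\D_i}_\alpha f(x)$.

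For the second claim, fix a dyadic grid $\D$ and a non-negative $f\in L_c^\infty$, and construct $\Ss$ by a stopping-time (``principal cube'') procedure. Let $\Ss_0$ consist of the maximal dyadic cubes intersecting $\supp(f)$; inductively, given $P\in\Ss$, add to $\Ss$ the maximal dyadic subcubes $Q\subsetneq P$ with $\avg{f}_Q>2\avg{f}_P$. If $\{P_j\}$ denotes the principal children of $P$, then
\[ \sum_j|P_j|\le\frac{1}{2\avg{f}_P}\sum_j\avg{f}_{P_j}|P_j|\le\frac{1}{2\avg{f}_P}\int_P f\,dx=\frac{1}{2}|P|, \]
which shows $\Ss$ is sparse in the sense of Definition~\ref{defn:sparse}. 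For each $Q\in\D$ contained in some element of $\Ss_0$, let $\pi(Q)\in\Ss$ denote the smallest principal cube containing $Q$; by the maximality in the stopping construction, $\avg{f}_Q\le 2\avg{f}_{\pi(Q)}$. Grouping the terms of $I^\D_\alpha f(x)$ by principal ancestor,
\[ I^\D_\alpha f(x)\le 2\sum_{P\in\Ss}\avg{f}_P\sum_{\substack{Q\in\D\\ \pi(Q)=P,\,x\in Q}}|Q|^{\alpha/n}. \]
For fixed $x\in P$ the cubes $Q\ni x$ with $\pi(Q)=P$ form a nested chain with side lengths at most $\ell(P)$ decreasing by factors of $2$, so the inner sum is a convergent geometric series bounded by $C_\alpha|P|^{\alpha/n}$, yielding $I^\D_\alpha f(x)\le CI^\Ss_\alpha f(x)$.

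The first claim is essentially bookkeeping once one accepts the three lattices theorem. The point that requires genuine care is the second claim: one must check that for each fixed $x$ and each principal cube $P$ with $x\in P$, the dyadic cubes $Q$ with $\pi(Q)=P$ and $x\in Q$ really do form a nested chain of strictly decreasing side lengths (which is what allows $\sum|Q|^{\alpha/n}$ to telescope into $C|P|^{\alpha/n}$ rather than blow up). The hypothesis $f\in L_c^\infty$ is only used to ensure that the stopping tree is rooted in a finite collection of top cubes; everything below is determined by the averages of $f$.
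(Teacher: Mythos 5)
Your first claim is fine: shifted grids plus the annular decomposition is exactly the standard route (the paper itself does not prove this lemma but cites the literature, where this is the argument), and the bounded multiplicity of the selected cubes $Q_k$ within each grid is clear since $\ell(Q_k)\approx 2^k$.

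The genuine gap is in the second claim, and it is not where you flagged it (the nested-chain/geometric-series step is correct); it is at the root of your stopping tree. In a dyadic grid as in Definition~\ref{defn:dyadic-grid} there are cubes of side length $2^k$ for \emph{every} $k\in\Z$, and every cube meeting $\supp(f)$ has a dyadic parent which also meets $\supp(f)$; hence ``the maximal dyadic cubes intersecting $\supp(f)$'' do not exist (e.g.\ $\supp f\subset[0,1)^n$ in the standard grid: all the cubes $[0,2^k)^n$, $k\ge 0$, meet the support and increase without bound). So $\Ss_0$ is empty, the construction never starts, and, more to the point, the terms of $I^\D_\alpha f(x)$ coming from arbitrarily large cubes containing the support are never assigned a principal ancestor and are not dominated by your sparse sum --- note these terms are nonzero even for $x$ far from $\supp f$. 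Your closing remark that $f\in L^\infty_c$ ``ensures the stopping tree is rooted in a finite collection of top cubes'' is precisely the false step: compact support bounds the cubes from below, not from above. Two standard fixes: (i) root the tree at the (at most $2^n$) minimal dyadic cubes $Q_0$ containing the portions of $\supp f$ in each maximal increasing chain of the grid, run your stopping construction below each $Q_0$, and adjoin to $\Ss$ the full ancestor chain of each $Q_0$; this preserves sparseness (all tree elements strictly inside an ancestor lie in its dyadic child, of relative measure $2^{-n}\le\tfrac12$), and since $|Q|^{\frac{\alpha}{n}}\avg{f}_Q=|Q|^{\frac{\alpha}{n}-1}\int_{Q_0}f$ decays geometrically along the ancestor chain, the contribution of all cubes above $Q_0$ at a point $x$ is controlled by the term of the smallest ancestor of $Q_0$ containing $x$, which now lies in $\Ss$; or (ii) replace the top-down construction by the level-set construction the paper uses in Section~\ref{section:strong-commutator}: with $a=2^{n+1}$ and $\Omega_k=\{M^\D f>a^k\}$, maximal cubes with $\avg{f}_Q>a^k$ do exist because $f\in L^1$ forces $\avg{f}_Q\to 0$ as $\ell(Q)\to\infty$, every cube with $\avg{f}_Q>0$ falls into some $\C_k=\{Q: a^k<\avg{f}_Q\le a^{k+1}\}$, and your geometric-series argument applied within each maximal cube of $\Omega_k$ gives the sparse domination with no missing scales.
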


Given $b\in BMO$ and $0<\alpha<n$, for any dyadic grid $\D$, define
the dyadic commutator 
\[ C_b^\D f(x) = \sum_{Q\in \D} |Q|^{\frac{\alpha}{n}}\avgint_Q |b(x)-b(y)|f(y)\,dy \cdot \chi_Q(x). \]

\begin{lemma} \label{lemma:dyadic-commutator}
There exists a collection $\{\D_i\}_{i=1}^N$ of dyadic grids such that
for $0<\alpha<n$, every non-negative function $f$, and every $b\in
BMO$,
\[ |[b,I_\alpha]f(x)| \leq C \sup_i  C_b^{\D^i} f(x). \]
\end{lemma}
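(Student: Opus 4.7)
The plan is to follow the proof of the first assertion of Lemma~\ref{lemma:dyadic-frac} essentially verbatim, observing that the commutator kernel $|b(x)-b(y)|\,|x-y|^{\alpha-n}$ differs from the fractional integral kernel only by the factor $|b(x)-b(y)|$; since $x$ is held fixed throughout the pointwise argument, this factor is just an arbitrary non-negative measurable function of the integration variable $y$. I would reuse the same finite family $\{\D_i\}_{i=1}^N$ of dyadic grids supplied by that lemma, which enjoys the covering property that for every $x \in \R^n$ and every $r > 0$ there exist an index $i$ and a cube $Q \in \D_i$ with $B(x,r)\subset Q$ and $\ell(Q) \leq C_n r$.

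With that family fixed, I would fix $x$ and decompose the defining integral into dyadic annuli:
\[
|[b,I_\alpha]f(x)| \leq \sum_{k\in\Z}\int_{2^k \leq |x-y|<2^{k+1}} |b(x)-b(y)|\,\frac{|f(y)|}{|x-y|^{n-\alpha}}\,dy.
\]
For each $k$, the annulus lies in $B(x,2^{k+1})$, hence in some cube $Q_k \in \D_{i(k)}$ with $x \in Q_k$ and $|Q_k|^{1/n} \simeq 2^k$. Since $|x-y|^{n-\alpha}\gtrsim |Q_k|^{(n-\alpha)/n}$ on that annulus, the $k$-th summand is bounded by a constant multiple of $|Q_k|^{\alpha/n}\avgint_{Q_k} |b(x)-b(y)|\,|f(y)|\,dy$.

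Summing over $k \in \Z$ and dominating the resulting sum by the unrestricted sum over all cubes in $\bigcup_i \D_i$ that contain $x$, I would conclude
\[
|[b,I_\alpha]f(x)| \leq C\sum_{i=1}^N \sum_{Q \in \D_i,\, x\in Q} |Q|^{\alpha/n}\avgint_Q |b(x)-b(y)|\,|f(y)|\,dy = C\sum_{i=1}^N C_b^{\D_i}f(x) \leq CN\sup_i C_b^{\D_i}f(x),
\]
which is the desired bound. There is no substantive obstacle beyond what is already needed for Lemma~\ref{lemma:dyadic-frac}: the covering property of $\{\D_i\}$ does all the work, and the factor $|b(x)-b(y)|$ passes through the grid-covering argument untouched because it is simply absorbed into the function being averaged over each cube.
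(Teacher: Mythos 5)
Your argument is correct and is essentially the approach the paper relies on: the paper does not prove this lemma itself (it is quoted from the cited references), and the proofs there proceed exactly as you do for Lemma~\ref{lemma:dyadic-frac}, via the finite family of shifted dyadic grids with the ball-covering property and an annular decomposition of the kernel, with the factor $|b(x)-b(y)|$ simply carried along inside the average. The only cosmetic point is that distinct annuli may select the same covering cube $Q_k$, but since $\ell(Q_k)\simeq 2^k$ each cube can be chosen for at most a dimensional number of values of $k$, so your domination of the $k$-sum by $\sum_i C_b^{\D_i}f(x)$ holds up to a harmless constant.
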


\begin{remark} \label{remark:reduce}
It follows at once from Lemmas~\ref{lemma:dyadic-frac}
and~\ref{lemma:dyadic-commutator} that to prove norm inequalities for $I_\alpha$ and
$[b,I_\alpha]$ it will suffice to prove them for their dyadic
counterparts.   Moreover, since these dyadic integral operators are
positive, we may assume that $f$ is non-negative in our proofs.
Finally, by Fatou's lemma it will suffice to prove our results for
functions $f\in L^\infty_c$.  In particular, this will let us pass to
sparse operators.
\end{remark}

\subsection*{Weighted fractional maximal operators}

We begin with some basic facts about Orlicz spaces.  Some of these
will also be needed in Section~\ref{section:strong-commutator} below.
For further information on these spaces, see~\cite{rao-ren91}; for
their use in weighted norm inequalities, see~\cite{MR2797562}.  Given
a weight $\sigma$, let $d\sigma = \sigma\,dx$.  We define averages 
with respect to the measure $d\sigma$:
\[ \avg{f}_{Q,\sigma} =
 \avgint_Q f(x)\,d\sigma = \frac{1}{\sigma(Q)}\int_Q f(x)\,d\sigma. \]
Given a Young function $\Phi$ and a cube $Q$,
 define the normalized Luxemburg norm with respect to $\Phi$ and $d\sigma$ by
\[ \|f\|_{\Phi, Q,\sigma} = \inf\bigg\{ \lambda > 0 :
\avgint_Q \Phi\bigg(\frac{|f(x)|}{\lambda}\bigg)\,d\sigma \leq 1
\bigg\}. \]
If we let $\Phi(t)=t^p$, $1\leq p<\infty$, then 
\[ \|f\|_{\Phi,Q,\sigma} = \left(\avgint_Q
  |f(x)|^p\,d\sigma\right)^{\frac{1}{p}} = \|f\|_{p,Q,\sigma}. \]
Associated to any Young function is its associate function
$\bar{\Phi}$.  We have the generalized H\"older's inequality:  for any
cube $Q$,
\[ \avgint_Q |f(x)g(x)|\,d\sigma \leq
C\|f\|_{\Phi,Q,\sigma}\|g\|_{\bar{\Phi},Q,\sigma}; \]
the constant depends only on $\Phi$.  

Hereafter, we will let $\Phi(t)=t\log(e+t)$; then it can be shown that
$\bar{\Phi}(t) \approx e^t-1$.   It follows for this choice of $\Phi$
that for $1<p<\infty$,
\[ \|f\|_{1,Q,\sigma} \leq \|f\|_{\Phi,Q,\sigma} \leq C(p)\|f\|_{p,Q,\sigma}. \]

We now define a weighted dyadic fractional maximal operator.  Given a
dyadic grid $\D$ and a weight $\sigma$, 
for $0\leq \alpha <n$ define 
\[ M^\D_{\Phi,\sigma,\alpha} f(x) =\sup_{Q\in \D} \sigma(Q)^{\frac{\alpha}{n}}
\|f\|_{\Phi,Q,\sigma}.  \]
If
$\Phi(t)=t$ we write $M^\D_{\sigma,\alpha}$.    If $\alpha=0$, then we
simply write $M^\D_{\Phi,\sigma}$ or $M^\D_\sigma$ if $\Phi(t)=t$.

\begin{lemma} \label{lemma:max-op}
Let $\Phi(t)=t\log(e+t)$.  Given $1<p<\infty$ and $0\leq \alpha <n$, define $q$ by
$\frac{1}{p}-\frac{1}{q}=\frac{\alpha}{n}$.   Given a weight $\sigma$ and a dyadic grid $\D$,
$M^\D_{\Phi,\sigma,\alpha} : L^p(\sigma) \rightarrow L^q(\sigma)$.    The same inequality holds for $M^\D_{\sigma,\alpha}$.
\end{lemma}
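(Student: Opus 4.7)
The plan is to establish the bound first for the non-Orlicz operator $M^\D_{\sigma,\alpha}$ and then deduce the Orlicz version by a pointwise domination argument. For $M^\D_{\sigma,\alpha}$ I proceed in the classical way: prove a weak $(p,q)$ bound by a Calder\'on-Zygmund decomposition adapted to the measure $d\sigma$, then upgrade to strong type by Marcinkiewicz interpolation.

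Fix $\lambda>0$ and, using $f\in L^\infty_c$ together with $\alpha<n$ to guarantee their existence, extract the maximal dyadic cubes $\{Q_j\}$ satisfying $\sigma(Q_j)^{\alpha/n}\avg{f}_{Q_j,\sigma}>\lambda$; these are pairwise disjoint and their union is $\{M^\D_{\sigma,\alpha}f>\lambda\}$. H\"older's inequality on each $Q_j$ combined with the identity $\frac{\alpha}{n}-\frac{1}{p}=-\frac{1}{q}$ gives
\[
\lambda < \sigma(Q_j)^{\alpha/n-1/p}\Bigl(\int_{Q_j}|f|^p\,d\sigma\Bigr)^{1/p}=\sigma(Q_j)^{-1/q}\Bigl(\int_{Q_j}|f|^p\,d\sigma\Bigr)^{1/p},
\]
so $\sigma(Q_j)\leq \lambda^{-q}\bigl(\int_{Q_j}|f|^p\,d\sigma\bigr)^{q/p}$. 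Since $q/p\geq 1$, the superadditivity $\sum_j a_j^{q/p}\leq \bigl(\sum_j a_j\bigr)^{q/p}$ together with the disjointness of the $Q_j$ yields the weak-type bound
\[
\sigma\bigl(\{M^\D_{\sigma,\alpha}f>\lambda\}\bigr)\leq \lambda^{-q}\|f\|_{L^p(\sigma)}^q.
\]
Because this holds for every $1\leq p<n/\alpha$ with matching $q$, the Marcinkiewicz interpolation theorem, applied at two exponents straddling the given $p$, gives the strong $(p,q)$ inequality.

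For the Orlicz operator, the inequality $\|f\|_{\Phi,Q,\sigma}\leq C(r)\|f\|_{r,Q,\sigma}$ displayed just above the lemma, combined with the elementary identity $\sigma(Q)^{\alpha/n}\|f\|_{r,Q,\sigma}=\bigl(\sigma(Q)^{r\alpha/n}\avg{|f|^r}_{Q,\sigma}\bigr)^{1/r}$, yields the pointwise domination
\[
M^\D_{\Phi,\sigma,\alpha}f(x)\leq C(r)\,\bigl(M^\D_{\sigma,r\alpha}(|f|^r)\bigr)^{1/r}(x).
\]
Choosing $1<r<p$ makes $p/r>1$, $r\alpha<n$, and $\frac{1}{p/r}-\frac{1}{q/r}=\frac{r\alpha}{n}$, so applying the already-established strong-type result to $M^\D_{\sigma,r\alpha}:L^{p/r}(\sigma)\to L^{q/r}(\sigma)$ completes the argument.

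The only real technical point is checking existence of the maximal cubes at each threshold; this uses $\alpha<n$ together with compact support of $f$, after which the general case is recovered by Fatou's lemma as in Remark~\ref{remark:reduce}. The interpolation and the Orlicz reduction are both routine once the pointwise domination is in place, so I do not expect any serious obstacle beyond assembling the pieces.
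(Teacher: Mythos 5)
Your argument is correct, and its engine is the same as the paper's: the decomposition of the level set into disjoint (maximal) dyadic cubes adapted to $d\sigma$, H\"older's inequality on each cube together with the identity $\frac{\alpha}{n}-\frac1p=-\frac1q$, the superadditivity coming from $q/p\geq 1$, and off-diagonal Marcinkiewicz interpolation. The only real difference is where the Orlicz norm is absorbed. The paper proves the weak $(p,q)$ bound for $M^\D_{\Phi,\sigma,\alpha}$ itself, inserting the comparison $\|f\|_{\Phi,Q,\sigma}\leq C(p)\|f\|_{p,Q,\sigma}$ directly into the weak-type computation and then interpolating; you instead prove the full result for the linear operator $M^\D_{\sigma,\alpha}$ and transfer it to the Orlicz operator via the pointwise domination $M^\D_{\Phi,\sigma,\alpha}f\leq C(r)\bigl(M^\D_{\sigma,r\alpha}(|f|^r)\bigr)^{1/r}$ with $1<r<p$, which rests on the same displayed inequality $\|f\|_{\Phi,Q,\sigma}\leq C(r)\|f\|_{r,Q,\sigma}$; the exponent bookkeeping ($p/r>1$, $r\alpha<n$, $\frac{r}{p}-\frac{r}{q}=\frac{r\alpha}{n}$) checks out, so this reduction is equally valid and has the mild advantage that interpolation is only ever applied to the $\Phi(t)=t$ operator. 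One small caveat: your claim that $f\in L^\infty_c$ and $\alpha<n$ guarantee the existence of maximal cubes is not quite right for a completely general weight (if $\sigma$ has finite mass on a dyadic quadrant, the quantity $\sigma(Q)^{\alpha/n-1}\int_Q f\,d\sigma$ need not drop below $\lambda$ along an increasing chain); this is harmless — either take maximal cubes among those of generation at most $N$ and let $N\to\infty$ by monotone convergence, or observe that every cube in such a chain already satisfies $\sigma(Q)\leq\lambda^{-q}\|f\|_{L^p(\sigma)}^q$ — and the paper glosses over the same point, but you should not attribute it to $\alpha<n$ alone.
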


\begin{proof}
This result is well-known when $\Phi(t)=t$; the proof is essentially
the same for $\Phi(t)=t\log(e+t)$  and we sketch the details.  
By off-diagonal Marcinkiewicz interpolation (see~\cite{stein-weiss71})
it will suffice to prove the corresponding weak $(p,q)$ inequality:
\[ \sigma(\{ x \in \R^n : M^\D_{\Phi,\sigma,\alpha} f(x) > t\})^{\frac{1}{q}}
\leq \frac{C}{t} \left(\int_\subRn |f(x)|^p\,d\sigma\right)^{\frac{1}{p}}. \]
 Fix $t>0$; then we can decompose
the level set as the union of disjoint cubes $Q \in \Q_t \subset \D$ that satisfy
\[ \sigma(Q)^{\frac{\alpha}{n}}
\|f\|_{\Phi,Q,\sigma} > t. \]
Therefore, since the cubes in $\Q_t$ are disjoint and $q/p\geq 1$, we have that
\begin{align*}
 \sigma(\{ x \in \R^n : M^\D_{\Phi,\sigma,\alpha} f(x) > t\})
& = \sum_{Q\in \Q_t}  \sigma(Q) \\
& \leq t^{-q} \sum_{Q\in \Q_t} \sigma(Q)^{1+q\frac{\alpha}{n}}
  \|f\|_{\Phi,Q,\sigma}^q \\
& \leq  \frac{C}{t^q}\sum_{Q\in \Q_t}  \sigma(Q)^{1+q\frac{\alpha}{n}} \left(\avgint_{Q}
  |f|^p\,d\sigma\right)^{\frac{q}{p}} \\
& \leq \frac{C}{t^q} \sum_{Q\in \Q_t} \left(\int_{Q} |f|^p\,d\sigma\right)^{\frac{q}{p}} \\
& \leq \frac{C}{t^q}\left(\sum_{Q\in \Q_t}  \int_{Q} |f|^p\,d\sigma\right)^{\frac{q}{p}} \\
& \leq \frac{C}{t^q} \left(\int_\subRn |f(x)|^p\,d\sigma\right)^{\frac{q}{p}}. 
\end{align*}
\end{proof}

\subsection*{Properties of $A_{p,q}$ weights}
In this section we gather a few basic facts about the $A_{p,q}$
weights and the closely related Muckenhoupt $A_p$ weights; for further
information see~\cite{duoandikoetxea01}.  For
$1<p<\infty$, $w\in A_p$ if
\[ [w]_{A_p} = \sup_Q \avgint_Q w(x)\,dx \left(\avgint_Q
  w(x)^{1-p'}\,dx\right)^{p-1} < \infty. \]
For $p=1$, $w\in A_1$ if
\[ [w]_{A_1} = \sup_Q \esssup_{x\in Q} \left(\avgint{Q}
  w(x)\,dx\right)w(x)^{-1} < \infty. \]
It follows at once from the definition that for all $p$ and $q$ such
that $\frac{1}{p}-\frac{1}{q}=\frac{\alpha}{n}$,  then
$w\in A_{p,q}$ if and only if $w^q \in A_r$, $r=1+\frac{q}{p'}$ and
$[w^q]_{A_r}= [w]_{A_{p,q}}^q$.  (When $p=1$, we interpret
$\frac{q}{p'}$ as $0$.)  By the duality of $A_p$ weights, $w^{-p'}\in
A_{r'}$ and $[w^{-p'}]_{A_{r'}}=[w]_{A_{p,q}}^{p'}$.  

As a consequence, we have that both $w^q$ and $w^{-p'}$ are in
$A_\infty$.  Below we will need to use two properties of $A_\infty$
weights; for both we give the sharp constant version.  There are
multiple definitions of the $A_\infty$ characteristic (cf.~\cite{duo-mr-ombrosi}) but for our
purposes we will simply use the fact that $[w]_{A_\infty} \leq
C(n)[w]_{A_p}$.  

\begin{lemma} \label{lemma-Ainfty}
Given a weight $\sigma \in A_p \subset A_\infty$, then for any cube
$Q$ and set $E\subset Q$:
\begin{enumerate}

\item $\displaystyle  \left(\frac{|E|}{|Q|}\right)^p \leq [\sigma]_{A_p}
  \frac{\sigma(E)}{\sigma(Q)}$;

\item $\displaystyle  \frac{\sigma(E)}{\sigma(Q)} \leq
  2\left(\frac{|E|}{|Q|}\right)^{\frac{1}{s'}}$,
where $s' = c(n) [\sigma]_{A_\infty}$. 

\end{enumerate}
\end{lemma}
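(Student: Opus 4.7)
Both statements follow from one-line applications of H\"older's inequality; the two parts differ only in which auxiliary integrability property of $\sigma$ is exploited. My plan is to derive (1) directly from the definition of $[\sigma]_{A_p}$, and (2) from the sharp reverse H\"older inequality for $A_\infty$ weights.

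For (1), I would write $|E| = \int_Q \chi_E \cdot \sigma^{1/p}\sigma^{-1/p}\,dx$ and apply H\"older with exponents $p$ and $p'$ to obtain
\[ |E| \leq \sigma(E)^{1/p} \Big(\int_Q \sigma^{1-p'}\,dx\Big)^{1/p'}. \]
Raising to the $p$-th power, dividing by $|Q|^p$, and identifying the factor $|Q|^{-p}\big(\int_Q \sigma^{1-p'}\big)^{p-1}$ with $[\sigma]_{A_p}/\sigma(Q)$ via the definition of the $A_p$ characteristic, yields (1) at once. No further machinery is needed here.

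For (2), the key input is the sharp reverse H\"older inequality for $A_\infty$ weights of Hyt\"onen and P\'erez: there is a dimensional constant $c(n)$ such that, setting $r = 1 + \tfrac{1}{c(n)[\sigma]_{A_\infty}}$, one has
\[ \Big(\avgint_Q \sigma^r\,dx\Big)^{1/r} \leq 2\avgint_Q \sigma\,dx \]
for every cube $Q$. Given this, I would estimate $\sigma(E) = \int_Q \chi_E\,\sigma\,dx$ by H\"older with exponents $r'$ and $r$:
\[ \sigma(E) \leq |E|^{1/r'} \Big(\int_Q \sigma^r\,dx\Big)^{1/r} \leq 2|E|^{1/r'}|Q|^{1/r}\avgint_Q \sigma\,dx = 2\Big(\frac{|E|}{|Q|}\Big)^{1/r'}\sigma(Q). \]
Dividing by $\sigma(Q)$ gives (2) with $s' = r' = 1 + c(n)[\sigma]_{A_\infty}$, which, up to absorbing the additive $1$ into the dimensional constant, matches the stated $s' = c(n)[\sigma]_{A_\infty}$.

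The only real obstacle is the correct quantitative form of the reverse H\"older inequality used in part (2); this is a nontrivial theorem but is by now standard in the sharp-constant $A_p$ literature and is invoked here as a black box. Everything else reduces to H\"older's inequality and an algebraic rearrangement of the relevant characteristic constants.
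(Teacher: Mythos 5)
Your proof is correct and follows essentially the same route as the paper: part (1) is the standard H\"older argument behind the $A_p$ definition (which the paper simply cites), and part (2) uses the Hyt\"onen--P\'erez sharp reverse H\"older inequality followed by H\"older on $\int_Q \chi_E\,\sigma\,dx$, exactly as in the paper, with the same harmless absorption of the additive $1$ in $s'$ into the dimensional constant.
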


\begin{proof}
The first inequality follows from the definition of $A_p$:
see~\cite{duoandikoetxea01}.   The second follows from the sharp form
of the reverse H\"older inequality due to Hyt\"onen and
P\'erez~\cite{MR3092729}:  for every cube $Q$,
\[ \left(\avgint_Q \sigma(x)^s\,dx\right)^{\frac{1}{s}} \leq 2
\avgint_Q \sigma(x)\,dx, \]
where $s = 1+ \frac{1}{c(n)[\sigma]_{A_\infty}}$.  The desired
inequality follows if we apply H\"older's inequality with exponent
$s$ to $\avgint_Q \sigma(x)\chi_E(x)\,dx$.
\end{proof}

\section{Proof of Theorem~\ref{thm:weak-frac}}
\label{section:endpoint}

  By Remark~\ref{remark:reduce} it will suffice to prove the weak
  $(1,q)$ inequality
  for the dyadic operator $I_\alpha^\D$, where $\D$ is an arbitrary
  dyadic grid, and for $f$ a non-negative function in $L^\infty_c$.
Furthermore, since $\supp(f)$ is contained in the union of at most $2^n$
  cubes in $\D$ (of equal size), by linearity it will suffice to prove
  this inequality assuming $\supp(f)$ is contained in a single cube.

Given a cube $Q$, we decompose $I_\alpha^\D$ into its inner and outer parts:
\begin{align*}
I_\alpha^\D f(x) = \sum_{P\in \D} |P|^{\frac{\alpha}{n}} \avg{f}_P \chi_P(x)
& = \sum_{\substack{P\in \D \\ P\subseteq Q}} |P|^{\frac{\alpha}{n}} \avg{f}_P \chi_P(x) + 
\sum_{\substack{P\in \D \\ Q\subsetneq P}} |P|^{\frac{\alpha}{n}} \avg{f}_P \chi_P(x) \\
& = I_{\alpha,Q}^{\D,in}f(x) + I_{\alpha,Q}^{\D,out}f(x). 
\end{align*}
For each $t>0$ define the set 
\[ E_t = \{ x\in \R^n : I_\alpha^\D f(x) > t \}.  \]
We can write $E_t$ as the union of a collection $\Q_t$ of maximal,
disjoint cubes in $\D$.  By maximality, if $Q\in \Q_t$ and we let $\hat{Q}$ be its dyadic parent, then there exists $z\in \hat{Q}\setminus Q$ such that for every $x\in Q$, 
\[ t \geq I_\alpha^D f(z) \geq I_{\alpha,Q}^{\D,out}f(z) =  I_{\alpha,Q}^{\D,out}f(x). \]
In particular, if we take any $x \in Q\cap E_{2t}$, then we must have that 
\[ I_{\alpha,Q}^{\D,in}f(x) > t.  \]

Now fix $T>0$ and let $t$ be such that $0<t<T/2$.   Let $v=w^q$.
Partition the cubes in $\Q_t$ into two sets:  we say $Q\in \Q_{L}$
(large cubes) if
\begin{equation} \label{eqn:large-cubes}
 v(Q\cap E_{2t}) \geq  2^{-q-1} v(Q),  
\end{equation}
and if the reverse inequality holds we say $Q\in \Q_{S}$ (small cubes).  
Since $E_{2t}\subset E_t$,  we have that
\begin{align*}
 (2t)^q v(E_{2t}) 
& = (2t)^q \sum_{Q\in \Q_t} v(Q\cap E_{2t}) \\
& = (2t)^q \sum_{Q\in \Q_S} v(Q\cap E_{2t}) 
+ (2t)^q \sum_{Q\in \Q_L} v(Q\cap E_{2t}) \\
& = I_S + I_L.  
\end{align*}

We estimate each sum separately.  The first is straightforward:
\[ I_S \leq (2t)^q \sum_{Q\in \Q_S} 2^{-q-1}v(Q) = \frac{1}{2}t^q v(E_t)
\leq \frac{1}{2}\sup_{0<t<T/2} t^q v(E_t)\leq \frac{1}{2}\sup_{0<t<T} t^q v(E_t). \]

To estimate the second, by inequality~\eqref{eqn:large-cubes} and
duality (since $I_{\alpha,Q}^\D$ is self-adjoint) we have that
\begin{align*}
I_L 
& = (2t)^q \sum_{Q\in \Q_L} v(Q\cap E_t)^q v(Q\cap E_t)^{1-q} \\
& \leq (2t)^q \sum_{Q\in \Q_L} v(\{ x\in Q :  I_{\alpha,Q}^{\D,in}f(x) > t \})^q v(Q\cap E_t)^{1-q} \\
& \leq C(q)\sum_{Q\in \Q_L} \left(\int_Q  I_{\alpha,Q}^{\D,in}f(x) v(x)\,dx\right)^q v(Q)^{1-q} \\
& =  C(q) \sum_{Q\in \Q_L} \left(\int_Q  f(x) I_{\alpha,Q}^{\D,in}v(x)\,dx\right)^q v(Q)^{1-q}.
\end{align*}

By the definition of $I_{\alpha,Q}^{\D,in}$ and $q$,
\[  I_{\alpha,Q}^{\D,in}v(x) = 
\sum_{\substack{P\in \D \\ P\subseteq Q}} |P|^{\frac{\alpha}{n}} \avg{v}_P \chi_P(x) 
= \sum_{\substack{P\in \D \\ P\subseteq Q}} |P|^{\frac{1}{q'}} \avg{v}_P ^{\frac{1}{q'}}\avg{v}_P ^{\frac{1}{q}} \chi_P(x). \]
Since $w\in A_{1,q}$, $v\in A_1 \subset A_\infty$, and so for any $P\subseteq Q$
$\avg{v}_P ^{\frac{1}{q}} \leq [w]_{A_{1,q}}w(x)$, and by 
Lemma~\ref{lemma-Ainfty},
\[  v(P) \leq 2\left(\frac{|P|}{|Q|}\right)^{\frac{1}{s'}} v(Q), \]
where $s' \leq c(n) [w]_{1,q}^q$.    If we fix $x$, then there exists a
nested sequence of  dyadic cubes $P_k\subset Q$, $k\geq 0$,  such that $x\in P_k$ and $\ell(P_k)=2^{-k} \ell(Q)$.  
Therefore, 
\[ \sum_{\substack{P\in \D \\ P\subseteq Q}} |P|^{\frac{1}{q'}} \avg{v}_P ^{\frac{1}{q'}}\chi_P(x)
= \sum_{k=0}^\infty v(P_k) ^{\frac{1}{q'}} 
\leq 2^{\frac{1}{q'}} \sum_{k=0}^\infty 2^{-\frac{kn}{q's'}} v(Q) ^{\frac{1}{q'}}
\leq C\, s' v(Q) ^{\frac{1}{q'}}.  \]

If we combine all of these estimates we get that
\[  I_{\alpha,Q}^{\D,in}(v)(x) \lesssim [w]_{A_{1,q}}^{1+q} v(Q)^{\frac{1}{q'}} w(x). \]
Hence,
\begin{multline*}
 I_L \leq C [w]_{A_{1,q}}^{q(q+1)} \sum_{Q\in \Q_L} 
\left(\int_Q f(x) w(x)\,dx\right)^{q} v(Q) ^{\frac{q}{q'}+1-q} \\
\leq [w]_{A_{1,q}}^{q(q+1)} \left(\sum_{Q\in \Q_L} \int_Q f(x) w(x)\,dx\right)^{q} 
\leq [w]_{A_{1,q}}^{q(q+1)}\left(\int_\subRn f(x) w(x)\,dx\right)^{q}. 
\end{multline*}

If we combine this with the estimate for $I_S$, we get 
\[  (2t)^q v(E_{2t}) \leq \frac{1}{2}\sup_{0<t<T} t^q v(E_t) + C [w]_{A_{1,q}}^{q(q+1)}\left(\int_\subRn f(x) w(x)\,dx\right)^{q}. \]
Since this inequality holds for any $t$, $0<t<T/2$, on the left, we can take the supremum over these values of $t$ and rearrange terms to get 
\[ \sup_{0<t<T} t^q v(E_t)  \leq C [w]_{A_{1,q}}^{q(q+1)}\left(\int_\subRn f(x) w(x)\,dx\right)^{q}, \]
provided that the supremum  is finite.   The desired inequality would then follow if we let $T\rightarrow \infty$. 

To see that this supremum is finite: By assumption,
$\supp(f)\subset Q$ for some cube $Q\in \D$.  If $x\not\in Q$, then
$I_\alpha^\D f(x)\neq 0$, only if there exists a dyadic cube $P$ that
contains $x$ and $Q$.  Let $P_0$ be the smallest such cube, and let
$\{P_k\}$ be the sequence of dyadic cubes containing $x$ such that
$P_0\subset P_k$ and $\ell(P_k)=2^k\ell(P_0)$.  Then
\[
 I_\alpha^\D f(x) = \sum_{k=0}^\infty 2^{k(\alpha-n)}|P_0|^{\frac{\alpha}{n}-1} \int_{P_k} f(y)\,dy 
\leq C |P_0|^{\frac{\alpha}{n}-1} \int_{P_0} f(y)\,dy \leq C M_\alpha^\D f(x).  
\]
Therefore, we have that 
\[ \sup_{0<t<T} t^q v(E_t) \leq T^q v(Q) +  \sup_{0<t<T}t^q\,v(\{ x\in \R^n\setminus Q : M_\alpha^\D f(x)>t/C\}). \]
The first term is finite since $v$ is locally integrable, and the
second is finite by the weighted weak $(1,q)$ inequality for $M_\alpha^\D$
(cf.~\cite{muckenhoupt-wheeden74}).     This completes the proof.

\section{Proof of Theorem~\ref{thm:strong-frac-one}}
\label{section:strong-frac}

By Remark~\ref{remark:reduce}, it will suffice to prove the strong $(p,q)$
inequality for $f$ non-negative and in $L^\infty_c$.  We may also
replace $I_\alpha$ by the sparse operator
$I_\alpha^\Ss$, where $\Ss$ is any sparse
subset of a dyadic grid~$\D$.

Let $v=w^q$ and $\sigma=w^{-p'}$ and estimate as follows:  there exists $g\in
L^{q'}(w^{-q'})$, $\|gw^{-1}\|_{q'}=1$, such that
\begin{multline*}
\|(I_\alpha^\Ss f)w\|_q 
 = \int_\subRn I_\alpha f(x) g(x)\,dx 
 = \sum_{Q\in \Ss} |Q|^{\frac{\alpha}{n}}\avg{f}_Q \int_Q g(x)\,dx \\
 = \sum_{Q\in \Ss}  |Q|^{\frac{\alpha}{n}-1}\sigma(Q)
v(Q)^{1-\frac{\alpha}{n}}
\avg{f\sigma^{-1}}_{Q,\sigma}
v(Q)^{\frac{\alpha}{n}}\avg{gv^{-1}}_{Q,v}. 
\end{multline*}
Since $1-\frac{\alpha}{n}=\frac{1}{p'}+\frac{1}{q}$, by the definition
of the $A_{p,q}$ condition and Lemma~\ref{lemma-Ainfty}
(applied to both $v$ and $\sigma$) we have that
\begin{equation} \label{eqn:two-Ainfty}
 |Q|^{\frac{\alpha}{n}-1}\sigma(Q)
v(Q)^{1-\frac{\alpha}{n}}  \leq [w]_{A_{p,q}} \sigma(Q)^{\frac{1}{p}}
v(Q)^{\frac{1}{p'}} 
\leq  [w]_{A_{p,q}}^{1+\frac{q}{p'}+\frac{p'}{p}} \sigma(E(Q))^{\frac{1}{p}}
v(E(Q))^{\frac{1}{p'}}.
\end{equation}
If we combine these two estimates,  then by H\"older's
inequality and Lemma~\ref{lemma:max-op} we get that 
\begin{align*}
&\|(I_\alpha^\Ss f)w\|_q \\
& \quad \leq  C [w]_{A_{p,q}}^{1+\frac{q}{p'}+\frac{p'}{p}}\sum_{Q\in\Ss}
\avg{f\sigma^{-1}}_{Q,\sigma}\sigma(E(Q))^{\frac{1}{p}}
v(Q)^{\frac{\alpha}{n}}\avg{gv^{-1}}_{Q,v}v(E(Q))^{\frac{1}{p'}} \\
& \quad \leq C [w]_{A_{p,q}}^{1+\frac{q}{p'}+\frac{p'}{p}}\bigg(\sum_{Q\in \Ss} 
\avg{f\sigma^{-1}}_{Q,\sigma}^p \sigma(E(Q))\bigg)^{\frac{1}{p}}
\bigg(\sum_{Q\in \Ss} [v(Q)^{\frac{\alpha}{n}}\avg{gv^{-1}}_{Q,v}]^{p'}
  v(E(Q))\bigg)^{\frac{1}{p'}} \\
& \quad \leq C [w]_{A_{p,q}}^{1+\frac{q}{p'}+\frac{p'}{p}}\bigg(\sum_{Q\in \Ss} 
\int_{E(Q)} M_\sigma^\D (f\sigma^{-1})(x)^p \,d\sigma\bigg)^{\frac{1}{p}}
\bigg(\sum_{Q\in \Ss} 
\int_{E(Q)} M_{v,\alpha}^\D(gv^{-1})(x)^{p'}\,dv
\bigg)^{\frac{1}{p'}} \\
& \quad \leq  C [w]_{A_{p,q}}^{1+\frac{q}{p'}+\frac{p'}{p}}\bigg(
\int_\subRn M_\sigma^\D (f\sigma^{-1})(x)^p \,d\sigma\bigg)^{\frac{1}{p}}
\bigg(
\int_\subRn M_{v,\alpha}^\D(gv^{-1})(x)^{p'}\,dv
\bigg)^{\frac{1}{p'}} \\
& \quad \leq C [w]_{A_{p,q}}^{1+\frac{q}{p'}+\frac{p'}{p}}  \bigg(
\int_\subRn (f (x)\sigma (x)^{-1})^p \,d\sigma\bigg)^{\frac{1}{p}}
\left(
\int_\subRn (g (x) v (x)^{-1})^{q'}\,dv
\right)^{\frac{1}{q'}} \\
& \quad = C [w]_{A_{p,q}}^{1+\frac{q}{p'}+\frac{p'}{p}} \|fw\|_p \|gw^{-1}\|_{q'} \\
&  \quad = C [w]_{A_{p,q}}^{1+\frac{q}{p'}+\frac{p'}{p}} \|fw\|_p.
\end{align*}

\section{Proof of Theorem~\ref{thm:strong-commutator}}
\label{section:strong-commutator}

For our proof we need two lemmas.  The first is a weighted estimate for functions in $BMO$; our
proof adapts ideas from Ho~\cite{MR2849152}.  

\begin{lemma} \label{lemma:wtd-bmo}
Let $\Phi(t)=t\log(e+t)$.  Given a weight $\sigma \in A_\infty$, then for any $b\in BMO$ and any
cube $Q$,
\[ \|b-\avg{b}_Q\|_{\bar{\Phi},Q,\sigma} 
\leq C[\sigma]_{A_\infty}\|b\|_{BMO}. \]
\end{lemma}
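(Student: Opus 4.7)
The plan is to show the required Luxemburg-norm bound by verifying directly that, for $\lambda = C[\sigma]_{A_\infty}\|b\|_{BMO}$ with $C$ sufficiently large,
\[
\frac{1}{\sigma(Q)}\int_Q \bar{\Phi}\!\left(\frac{|b(x)-\avg{b}_Q|}{\lambda}\right)\,d\sigma \leq 1.
\]
Since $\bar{\Phi}(t) \approx e^{t} - 1$, this is equivalent (after adjusting the constant $C$) to estimating the same expression with $e^{t}-1$ in place of $\bar{\Phi}(t)$. Using the layer-cake identity $e^{u}-1 = \int_0^{u} e^{s}\,ds$ and Fubini, the integral becomes
\[
\int_Q (e^{|b(x)-\avg{b}_Q|/\lambda} - 1)\,d\sigma = \frac{1}{\lambda}\int_0^\infty e^{t/\lambda}\,\sigma(\{x\in Q : |b(x)-\avg{b}_Q|>t\})\,dt.
\]

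The two ingredients I would combine are the classical John--Nirenberg inequality
\[
|\{x\in Q : |b(x)-\avg{b}_Q|>t\}| \leq C_1|Q|\,e^{-C_2 t/\|b\|_{BMO}},
\]
and Lemma~\ref{lemma-Ainfty}(2), which transports this \emph{unweighted} decay into a weighted one: with $s' = c(n)[\sigma]_{A_\infty}$,
\[
\sigma(\{x\in Q : |b(x)-\avg{b}_Q|>t\}) \leq 2\sigma(Q)\left(C_1 e^{-C_2 t/\|b\|_{BMO}}\right)^{1/s'} \leq 2C_1^{1/s'}\sigma(Q)\,e^{-C_2 t/(s'\|b\|_{BMO})}.
\]
Plugging this bound into the layer-cake integral gives
\[
\frac{1}{\sigma(Q)}\int_Q (e^{|b(x)-\avg{b}_Q|/\lambda}-1)\,d\sigma \leq \frac{2 C_1^{1/s'}}{\lambda}\int_0^\infty e^{t/\lambda - C_2 t/(s'\|b\|_{BMO})}\,dt,
\]
which converges precisely when $\lambda > s'\|b\|_{BMO}/C_2$. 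Choosing $\lambda = C s'\|b\|_{BMO}$ with $C$ large enough renders the right-hand side $\leq 1$; since $s' \leq c(n)[\sigma]_{A_\infty}$, this yields $\|b-\avg{b}_Q\|_{\bar{\Phi},Q,\sigma}\leq C[\sigma]_{A_\infty}\|b\|_{BMO}$, as desired.

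The main obstacle is the tension between the exponential growth $e^{t/\lambda}$ coming from $\bar{\Phi}'$ and the exponential decay of the $\sigma$-distribution function of $b-\avg{b}_Q$: the reverse-Hölder exponent from Lemma~\ref{lemma-Ainfty}(2) weakens the John--Nirenberg decay rate by the factor $1/s'$, and this is exactly what forces $\lambda$ to scale linearly in $[\sigma]_{A_\infty}$ rather than just logarithmically. Once one recognizes that the \emph{sharp} reverse Hölder inequality of Hyt\"onen--P\'erez is the right tool on the $\sigma$ side, the rest of the argument is a routine John--Nirenberg computation.
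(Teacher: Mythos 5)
Your proof is correct and follows essentially the same route as the paper's: the John--Nirenberg inequality, transferred to the measure $\sigma\,dx$ via Lemma~\ref{lemma-Ainfty}(2) (the sharp reverse H\"older estimate), followed by a layer-cake computation with $\lambda\sim s'\|b\|_{BMO}\sim[\sigma]_{A_\infty}\|b\|_{BMO}$. The only cosmetic difference is that the paper works directly with $\bar{\Phi}(t)\leq ke^{t}$ inside the modular rather than passing to $e^{t}-1$ up front, and your remark about adjusting $C$ correctly absorbs that equivalence.
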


\begin{proof}
By the John-Nirenberg inequality, there exist constants $C_1,\,C_2$
such that for every cube $Q$ and $\lambda,\, t>0$,
\[ |\{ x\in Q : |b(x)-\avg{b}_Q| > \lambda t \}|
\leq C_1|Q|\exp\left( -\frac{C_2 \lambda t }{\|b\|_{BMO}}\right). \]
Since $\sigma \in A_\infty$, by Lemma~\ref{lemma-Ainfty} we have that
\[ \sigma(\{ x\in Q : |b(x)-\avg{b}_Q| > \lambda t \})
\leq 2C_1^{\frac{1}{s'}} \sigma(Q)\exp\left( -\frac{C_2 \lambda t}{\|b\|_{BMO}s'}\right), \]
where $s'=c(n)[\sigma]_{A_\infty}$.  Let 
\[ \lambda =  \frac{(1+2kC_1^{\frac{1}{s'}})\|b\|_{BMO}s'}{C_2} =
C[\sigma]_{A_\infty}\|b\|_{BMO}, \]
where $\bar{\Phi}(t) \leq ke^t$.  Then  we have that
\begin{align*}
\avgint_Q
  \bar{\Phi}\bigg(\frac{|b(x)-\avg{b}_Q|}{\lambda}\bigg)\,d\sigma 
& \leq k\sigma(Q)^{-1} \int_0^\infty e^t \sigma(\{ x\in Q :
  |b(x)-\avg{b}_Q| > \lambda t \}) \,dt \\
& \leq  2kC_1^{\frac{1}{s'}}\int_0^\infty e^t \exp\left( -\frac{C_2 \lambda t
  }{\|b\|_{BMO}s'}\right)\,dt \\
& \leq  2kC_1^{\frac{1}{s'}} \int_0^\infty e^{- 2kC_1^{\frac{1}{s'}}t}\,dt \\
& = 1. 
\end{align*}
Therefore, by the definition of the Luxemburg norm, we get the desired
inequality.
\end{proof}

The second lemma is a weighted variant of an estimate
from~\cite{cruz-uribe-martell-perez06b}; when $\Phi(t)=t$ the
unweighted estimate is originally due to Sawyer and
Wheeden~\cite{MR1175693}.

\begin{lemma} \label{lemma:sum-alpha}
Fix $0<\alpha<n$,  a dyadic grid $\D$, a weight $\sigma$ and a Young function $\Phi$.  Then for any $P\in
\D$ and any function $f$,
\[ \sum_{\substack{Q\in \D \\ Q\subset P}} 
|Q|^{\frac{\alpha}{n}} \sigma(Q)|\|f\|_{\Phi,Q,\sigma}
\leq C(\alpha) |P|^{\frac{\alpha}{n}}  \sigma(P)|\|f\|_{\Phi,P,\sigma}. \]
\end{lemma}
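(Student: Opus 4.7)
\medskip

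\textbf{Proof plan.} The plan is to group the sum by dyadic scale, prove a scale-uniform bound, and then sum a geometric series in $\alpha$. For $k \geq 0$ let $\D_k(P)$ denote the partition of $P$ into its dyadic subcubes of side length $2^{-k}\ell(P)$, and write $\lambda_Q = \|f\|_{\Phi,Q,\sigma}$. By homogeneity of the Luxemburg norm I may normalize so that $\lambda_P = 1$; the defining inequality then reads $\int_P \Phi(|f|)\,d\sigma \leq \sigma(P)$.

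The heart of the matter is the scale-uniform bound
\[
\sum_{Q \in \D_k(P)} \sigma(Q)\,\lambda_Q \;\leq\; 2\,\sigma(P),\qquad k \geq 0.
\]
I prove this by splitting $\D_k(P)$ into the cubes with $\lambda_Q \leq 1$ and those with $\lambda_Q > 1$. On the first class, $\sigma(Q)\lambda_Q \leq \sigma(Q)$, and since cubes in $\D_k(P)$ are pairwise disjoint this contributes at most $\sigma(P)$. On the second class I use that any Young function $\Phi$ satisfies $\Phi(t/a) \leq \Phi(t)/a$ for $a \geq 1$, which is immediate from convexity together with $\Phi(0) = 0$. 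For any $\lambda$ with $1 \leq \lambda < \lambda_Q$ the infimum definition of the Luxemburg norm forces $\int_Q \Phi(|f|/\lambda)\,d\sigma > \sigma(Q)$, and combining this with the convexity inequality $\Phi(|f|/\lambda) \leq \Phi(|f|)/\lambda$ yields $\sigma(Q)\lambda \leq \int_Q \Phi(|f|)\,d\sigma$. Letting $\lambda \nearrow \lambda_Q$ and summing over the disjoint cubes of this class bounds their total contribution by $\int_P \Phi(|f|)\,d\sigma \leq \sigma(P)$.

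With the scale-uniform estimate in hand, the identity $|Q|^{\alpha/n} = 2^{-k\alpha}|P|^{\alpha/n}$ for $Q \in \D_k(P)$ turns the full sum into a geometric series:
\[
\sum_{\substack{Q \in \D \\ Q \subset P}} |Q|^{\alpha/n}\,\sigma(Q)\,\lambda_Q \;=\; |P|^{\alpha/n} \sum_{k \geq 0} 2^{-k\alpha} \sum_{Q \in \D_k(P)} \sigma(Q)\,\lambda_Q \;\leq\; \frac{2\,|P|^{\alpha/n}\sigma(P)}{1 - 2^{-\alpha}}.
\]
Undoing the normalization gives the lemma with $C(\alpha) = 2/(1 - 2^{-\alpha})$.

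The only subtle step is the scale-uniform bound. The naive approach of dominating $\lambda_Q$ pointwise by a dyadic Orlicz maximal function and integrating fails, because that maximal function has no useful $L^1(\sigma)$ estimate. Thresholding at $\lambda_Q = 1$ and invoking the subhomogeneity $\Phi(t/a) \leq \Phi(t)/a$ only above the threshold, with the trivial estimate below, is exactly what convexity of a Young function supplies; no $\Delta_2$ or $A_\infty$ hypothesis on $\Phi$ or $\sigma$ is required, which is why the constant depends only on $\alpha$.
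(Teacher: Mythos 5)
Your proof is correct, and its skeleton is the same as the paper's: sum scale by scale over the partitions $\D_k(P)$ of $P$, prove a bound at each scale of the form $\sum_{Q\in\D_k(P)}\sigma(Q)\|f\|_{\Phi,Q,\sigma}\lesssim\sigma(P)\|f\|_{\Phi,P,\sigma}$, and then sum the geometric series $\sum_k 2^{-k\alpha}$ coming from $|Q|^{\alpha/n}=2^{-k\alpha}|P|^{\alpha/n}$. Where you differ is in how the per-cube estimate is obtained. The paper passes to the Amemiya norm (citing Rao--Ren): it fixes one $\lambda_0$ nearly optimal for $P$ in the expression $\lambda\,\avgint_P\bigl(1+\Phi(|f|/\lambda)\bigr)\,d\sigma$ and uses $\sigma(Q)\|f\|_{\Phi,Q,\sigma}\le\lambda_0\int_Q\bigl(1+\Phi(|f|/\lambda_0)\bigr)\,d\sigma$, so that the integrals over a partition of $P$ add up. Your normalization $\|f\|_{\Phi,P,\sigma}=1$ together with the split of each scale into $\{\lambda_Q\le 1\}$ (use disjointness and the ``$1$'' term) and $\{\lambda_Q>1\}$ (use $\Phi(t/a)\le\Phi(t)/a$ for $a\ge1$ to get $\sigma(Q)\lambda_Q\le\int_Q\Phi(|f|)\,d\sigma$) is precisely an inline proof of the relevant half of that Luxemburg--Amemiya equivalence, so your argument is self-contained, needs no external reference, and gives the explicit constant $2/(1-2^{-\alpha})$. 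One small point worth a sentence in a write-up: your normalized hypothesis $\int_P\Phi(|f|)\,d\sigma\le\sigma(P)$ uses that the Luxemburg infimum is attained (monotone convergence plus left-continuity of $\Phi$); this is standard, and could alternatively be sidestepped by normalizing with $(1+\epsilon)\|f\|_{\Phi,P,\sigma}$ and letting $\epsilon\to0$, at no cost to the constant.
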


\begin{proof}
To prove this we need to replace the  Luxemburg norm with the equivalent Amemiya norm~\cite[Section~3.3]{rao-ren91}:  
\[ \|f\|_{\Phi,P,\sigma} \leq  \inf_{\lambda>0} \left\{ \lambda
  \avgint_P 1+ \Phi\left(\frac{|f(x)|}{\lambda}\right)\,d\sigma \right\} \leq 2\|f\|_{\Phi,P,\sigma}. \]
By the second inequality, we can fix $\lambda_0>0$ such that the
middle quantity is less than
$3\|f\|_{\Phi, P,\sigma}$.   Then by the first inequality, 
\begin{align*}
 \sum_{\substack{Q\in \D \\ Q\subset P}} 
|Q|^{\frac{\alpha}{n}} |Q|\|f\|_{\Phi,Q,\sigma}
& = \sum_{k=0}^\infty \sum_{\substack{Q\subset P\\
  \ell(Q)=2^{-k}\ell(P)}}
|Q|^{\frac{\alpha}{n}} |Q|\|f\|_{\Phi,Q,\sigma} \\
& \leq |P|^{\frac{\alpha}{n}} \sum_{k=0}^\infty  2^{-k\alpha} 
\sum_{\substack{Q\subset P\\
  \ell(Q)=2^{-k}\ell(P)}} 
\lambda_0 \int_Q 1+ \Phi\left(\frac{|f(x)|}{\lambda_0}\right)\,d\sigma
\\
& = C |P|^{\frac{\alpha}{n}}
\lambda_0 \int_P 1+ \Phi\left(\frac{|f(x)|}{\lambda_0}\right)\,d\sigma
\\
& \leq C |P|^{\frac{\alpha}{n}}\sigma(P)\|f\|_{\Phi,P,\sigma}.
\end{align*}
\end{proof}

\begin{proof}[Proof of Theorem~\ref{thm:strong-commutator}]
Again by Remark~\ref{remark:reduce}, it will suffice to prove that given any dyadic grid $\D$,
\[ \|C^\D_b f\|_{L^q(w^q)} \leq C\|f\|_{L^p(w^p)}, \]
where  $f$ is non-negative
and in $L^\infty_c$.   By duality there exists a non-negative function
$g\in L^{q'}(w^{-q})$, $\|g\|_{L^{q'}(w^{-q}) }=1$, such that 
\begin{align*}
\|C^\D_b f\|_{L^q(w^q)}
& = \int_\subRn C^\D_b f(x)g(x)\,dx \\
& = \sum_{Q\in \D} |Q|^{\frac{\alpha}{n}} \int_Q\avgint_Q |b(x)-b(y)|f(y) g(x)
  \,dy\,dx \\
& \leq \sum_{Q\in \D} |Q|^{\frac{\alpha}{n}} \avgint_Q
  |b(y)-\avg{b}_Q|f(y)\,dy \int_Q g(x)\,dx \\
& \qquad \qquad + \sum_{Q\in \D} |Q|^{\frac{\alpha}{n}} \avgint_Q
  |b(x)-\avg{b}_Q | g(x)\,dx \int_Qf(y)\,dy \\
& = I_1 + I_2. 
\end{align*}

We will first estimate $I_1$.  
Let $v=w^{q}$ and $\sigma=w^{-p'}$, and let $\Phi(t)=t\log(e+t)$.  Then by
Lemma~\ref{lemma:wtd-bmo}, since $[\sigma]_{A_\infty}\leq [w]^{p'}_{A_{p,q}}$, 
\begin{align*}
I_1
& = \sum_{Q\in \D} |Q|^{\frac{\alpha}{n}} \sigma(Q)\avgint_Q
  |b(y)-\avg{b}_Q|f(y)\sigma(y)^{-1}\,d\sigma \avgint_Q g(x)\,dx \\
& \leq C \sum_{Q\in \D} |Q|^{\frac{\alpha}{n}} \sigma(Q)
\|f\sigma^{-1}\|_{\Phi,Q,\sigma} \|b-\avg{b}_Q\|_{\bar{\Phi},Q,\sigma}
  \avgint_Q g(x)\,dx \\
& \leq C [w]^{p'}_{A_{p,q}}\|b\|_{BMO}
\sum_{Q\in \D} |Q|^{\frac{\alpha}{n}} \sigma(Q)
  \|f\sigma^{-1}\|_{\Phi,Q,\sigma}\avgint_Q g(x)\,dx. 
\end{align*}

We now want to show that we can replace the summation over cubes in
$\D$ by a summation over a sparse subset $\Ss$ of $\D$.  We do
this using an argument from~\cite{cruz-uribe-martell-perez06b}; see
also~\cite{cruz-moen2012}. 
Fix $a=2^{n+1}$ and define the sets
\[ \Omega_k = \{ x\in \R^n : M^\D g(x)>a^k\}.  \]
Then arguing exactly as in the construction of the Calder\'on-Zygmund
cubes (see~\cite[Appendix~A]{MR2797562}), each set $\Omega_k$ is the
union of a collection $\Ss_k$ of maximal, disjoint cubes in $\D$ that
have the property that $a^k < \avg{g}_Q \leq 2^na^k$.  Moreover, the
set $\Ss= \bigcup_k \Ss_k$ is sparse.  

Now let 
\[ \C_k = \{ Q\in \D : a^k < \avg{g}_Q \leq a^{k+1} \}. \]
Then by the maximality of the cubes in $\Ss_k$, every cube $P\in \C_k$
is contained in a unique cube in $\Ss_k$.   Therefore, we can continue
the above estimate and apply Lemma~\ref{lemma:sum-alpha} to get
\begin{align*}
I_1 
& \leq  C [w]^{p'}_{A_{p,q}}\|b\|_{BMO}
\sum_k \sum_{Q\in \C_k} |Q|^{\frac{\alpha}{n}} \sigma(Q)
  \|f\sigma^{-1}\|_{\Phi,Q,\sigma}\avgint_Q g(x)\,dx \\
&\leq  C [w]^{p'}_{A_{p,q}}\|b\|_{BMO}
\sum_k a^k \sum_{P\in \Ss_k} \sum_{Q\subset P} |Q|^{\frac{\alpha}{n}} \sigma(Q)
  \|f\sigma^{-1}\|_{\Phi,Q,\sigma} \\
& \leq  C [w]^{p'}_{A_{p,q}}\|b\|_{BMO}
\sum_k  \sum_{P\in \Ss_k} |P|^{\frac{\alpha}{n}} \sigma(P)
  \|f\sigma^{-1}\|_{\Phi,P,\sigma} \avgint_P g(x)\,dx \\
& = C [w]^{p'}_{A_{p,q}}\|b\|_{BMO}
\sum_{P\in \Ss} |P|^{\frac{\alpha}{n}-1} \sigma(P)v(P)^{1-\frac{\alpha}{n}}
  \|f\sigma^{-1}\|_{\Phi,Q,\sigma}
v(P)^{\frac{\alpha}{n}}\avgint_Q gv^{-1}\,dv.
\end{align*}

We can now argue exactly as in the proof of
Theorem~\ref{thm:strong-frac-one}, beginning with the
estimate~\eqref{eqn:two-Ainfty} and applying Lemma~\ref{lemma:max-op}
to complete the estimate of $I_1$  with a constant
$[w]_{A_{p,q}}^{p'+1+\frac{q}{p'}+\frac{p'}{p}}$.    

The estimate for $I_2$ is
essentially the same, exchanging the roles of $f$ and $g$ and $\sigma$
and $v$.  This yields the above estimate except that the constant
is now
$[w]_{A_{p,q}}^{q+1+\frac{q}{p'}+\frac{p'}{p}}$. 
 This completes the proof.
\end{proof}

\section{Tres recuerdos de Cora Sadosky}
\label{section:cora}

I first met Cora at an AMS sectional meeting in Burlington, Vermont,
in 1995.  It began inauspiciously: at the reception
on the first night, a determined looking woman came up to me, waved her
finger under my nose and said, ``I have a bone to pick with you.  We
will talk later,'' and then marched off.   She found me again about 30
minutes later and proceeded to explain.  The year before
she had been asked by an NSF reviewer for her opinion of my proposal
which mentioned the two weight problem for the Hilbert transform.
She had told the reviewer to refer me to a paper by her and Mischa Cotlar where they gave the first
(and for a long time the only) characterization of these pairs of
weights.  He did not, however, share this reference, and when I
published the paper based on this work I did not cite it.  Cora assumed that I had
simply disregarded this advice and was understandably annoyed.  
However, once I explained that I had never received this information she
immediately became much friendlier and invited me to visit her in
Washington, D.C.  

For the next few years she took an interest in my career.  Her first
major intervention on my behalf came in 1996, when she 
applied her forceful personality, first to convince me that I must
attend the {\em El Escorial} conference in 1996 (despite moving,
changing jobs, and having two small children and a pregnant wife), and
then to strong-arm funding from a colleague to pay for my trip.  It
was at this meeting that I met, among others, Carlos P\'erez, and
began a collaboration that has continued to
the present day.

Two years later, in 1998, we met again at an AMS sectional meeting in
Albuquerque.  At this meeting she picked up on a point that my Spanish
colleagues were also making:  given my
name and my ancestry, I really ought to be able to speak Spanish.  Her solution was that
I should ``read a good math book in Spanish.''  She strongly
recommended that I read Javier Duoandikoetxea's book, {\em An\'alisis
  de Fourier}, telling me that I would see some good mathematics as
well as ``learn Spanish.''  For the next year I worked through the text
line by line, in the process writing a complete translation.  I
approached Javi
with an offer to complete the translation and update
the notes, and together we produced an
English edition.  In the process I did in fact learn a
great deal of harmonic analysis, but unfortunately, Cora's original
goal was not achieved:  my spoken Spanish did not improve appreciably.  
Moreover, this translation had the unintended consequence of convincing large
numbers of mathematicians from Spain, Argentina and elsewhere that I
did in fact speak Spanish.  

Cora and I never collaborated on a paper.  She suggested several
projects, but my interests were moving away from hers and nothing came
to fruition.  At the time I never really quite understood or
appreciated the support she provided at these points in my career, and
it is only in looking back that I realize how much I owe her.  So
belatedly I say, {\em much\'\i simas gracias, Cora.}

\bibliographystyle{plain}
\bibliography{simple-frac}

\end{document}